\theoremstyle{plain} \numberwithin{equation}{section}
\newtheorem{thm}{Theorem}[section]
\newtheorem{prop}[thm]{Proposition}
\theoremstyle{definition}
\newtheorem{defn}{Definition}
\newtheorem{exam}[thm]{Example}
\theoremstyle{remark}
 \newtheorem{rem}{Remark}
\def\R{\Bbb R}
\def\F{\Bbb F}
\def\w{\widetilde}
\def\V{\text{Vert}}
\def\rank{\text{Rank}}
\begin{document}
\title[ convex hulls of face-vertex incident vectors of 3-colorable polytopes]{\large \bf  convex hulls of face-vertex incident vectors of 3-colorable Polytopes}
\author[Bo Chen]{Bo Chen}
\author[Chen Peng]{Chen Peng}
\author[Yueshan Xiong]{Yueshan Xiong}
\keywords{(0,1)-convex polytope, 3-colorable, Gale diagram, Barnette's Conjecture}
\thanks{Supported in part by grants from National Natural Science Foundation of China(No. 11371093, No. 11801186) }
\address{School of Mathematics and Statistics, Huazhong University of Science and Technology, Wuhan, 430074, P. R.  China}
\email{bobchen@hust.edu.cn}

\address{School of Mathematics and Statistics, Huazhong University of Science and Technology, Wuhan, 430074, P. R.  China}
\email{775461719@qq.com}

\address{School of Mathematics and Statistics, Huazhong University of Science and Technology, Wuhan, 430074, P. R.  China}
\email{yueshan\_xiong@hust.edu.cn}

 \begin{abstract} The convex hulls of face-vertex incident vectors of  3-face-colorable convex polytopes are computed. It's found that every such convex hull is a $d$-polytope with $d+2$ or $d+3$ vertices. Utilizing Gale transform and Gale diagram, we calculate its combinatorial structure. Finally, a necessary and sufficient criterion  for combinatorial equivalence of two such convex hulls is given.
 \end{abstract}

\maketitle

\section{Introduction}\label{int}

In this paper, we focus on a class of 3-face-colorable convex polytope. A convex polytope $P$ is \emph{3-face-colorable}(\emph{3-colorable} in abbreviation) if each facet of $P$ can be assigned a number(color) among 1, 2, and 3, such that adjoin facets are equipped with pairwise different colors. Hence a 3-colorable convex polytope must be 3-dimensional and simple(each vertex belongs to exact three faces).

There is a conjecture proposed by D. W. Barnette in 1969, which says that the 1-skeleton of any 3-colorable polytope is Hamiltonian, i.e., such graph has a cycle passing through each vertex once. The conjecture is still open nowadays.

In this paper, we always assume that $P$ is a 3-colorable polytope with $2n$ vertices(3-colorable polytopes must have even number of vertices). Applying the Euler's formula, we know that $P$ has $2n$ vertices, $3n$ edges and $n+2$ faces, i.e. the f-vector of $P$ is $(2n,3n,n+2).$

Let $V(P)$ be the set of vertices of $P$. For any face $f$ of $P$, set 
\begin{equation*}
\begin{aligned}
\xi_f: &~ V(P) & \rightarrow & ~\{0,1\} \\
      & ~~~~~v&\mapsto  & \begin{cases} 1, & \text{ if } v\in f;\\ 0, & \text{ otherwise}.  \end{cases}
\end{aligned}
\end{equation*}
 
Now  choose an order on $V(P)$, and identify $V(P)$ by $\{1,2,\cdots, 2n\}(|V|=2n)$. Then $\xi_f$ could be identified by a $2n$-entries vector. $\xi_f$ is called the \emph{face-vertex incident vector} of $P$ corresponding to face $f$.
Let $S$ be the set of all face-vertex incident vectors of $P$, i.e., $$S=S(P)=\{\xi_f ~|~ f \text{ is a face of } P \}\subseteq \R^{2n}.$$ 

As pointed out in \cite{cly}, when  $S$ is treated as a set of vectors in $\F_2^{2n}$ where $\F_2$ is the two-element field,  $S$ spans a binary self-dual codes with minimal distance 4. The convex hull of binary self-dual codes was investigated in \cite{ka}.

In this paper we investigate the convex hull $$\w{P}=conv(S)\subseteq \R^{2n}.$$

Theorem \ref{dim} gives the dimension of the convex hull $\w{P}$ is $n$ or $n-1$. Obviously, the vectices set is coincide with $S$, hence $\w{P}$ has exact $n+2$ vertices. So $\w{P}$ is a convex polytope with few vertices: the difference between the number of vertices and dimension is 2 or 3.

Heavily utilizing   the theory of Gale  transform and Gale diagram(\cite{g}), we compute the combinator of $\w{P}$ in Proposition \ref{1}, \ref{2}, \ref{3} and \ref{4}. In detail, let $m_i=m_i(P)$ be the number of faces of $P$ colored by $i=1,2,3.$ Assume $m_1\leq m_2\leq m_3.$ Then

 \begin{itemize}
\item[(1)] if 
$m_1< m_2<m_3$, then $\w{P}$ is the convex hull of a point $w$ and an $n$-simplex where the point $w$ is beyond exact $m_2-1$ facets of the $n$-simplex.
\item[(2)]  if 
 $m_1< m_2=m_3$, then $\w{P}$ is an $m_1$-fold $n$-pyramid with basis $C(2m_2,2m_2-2)$, a $(2m_2-2)$-dimensional cyclic polytope with $2m_2$ vertices.
\item[(3)]  if 
$m_1= m_2<m_3$, then $\w{P}$ is an $m_3$-fold $n$-pyramid with basis $C(2m_2,2m_2-2)$.
\item[(4)]  if 
$m_1= m_2=m_3$, then $\w{P}$ is the convex hull of a point $w$ and an $(m_2-1)$-fold $(n-1)$-pyramid with basis  $C(2m_2,2m_2-2)$, where $w$ is exactly beyond all the non-simplex facet of the $(m_2-1)$-fold $(n-1)$-pyramid.
\end{itemize}

Finally, we point out in theorem \ref{iso} that two convex hulls $\w{P}$ and $\w{P'}$ are combinatorially equivalent if and only $P$ and $P'$ have same f-vectors, same  type of the coloring  vectors $(m_1,m_2,m_3)$ and $m_2(P)=m_2(P')$.



\section{ $\w{P}$ is a polytope with few vertices}



\subsection{Vertices of $\w{P}$}
Since $\xi_f$'s are all $(0,1)$-vector, the vertex set $V$($=V(\w{P})$) of their convex hull $\w{P}$ is coincide with $S=\{\xi_f ~|~ f\text{ is a face of }P \}$. Hence $|V|=f_2(P)=n+2$.  
Set $$
V_i=V_i(P)=\{\xi_f ~|~ f \text{ is a face of }P \text{ colored by } i\}, i=1,2,3.
$$
Then $V=V_1\cup V_2\cup V_3$. A vertex of $\w{P}$ is \emph{colored by $i$}, if it belongs to $V_i$. 
Denote by $$m_i=|V_i |$$
the number of vertices of $\w{P}$ colored by $i=1,2,3$.
 Hence $$m_1+m_2+m_3=n+2.$$

\subsection{Dimension of $\w{P}$}
\begin{thm}\label{dim}
$\dim \w{P} =\begin{cases} n-1, &\text{ if } m_1=m_2=m_3;\\ n, &\text{ otherwise}.\end{cases}$
\end{thm}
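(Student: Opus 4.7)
The plan is to compute the affine dimension of $\widetilde{P}$ in two stages: first determine the full space of linear dependences among the $\xi_f$'s, and then decide when the origin lies in $\mathrm{aff}(S)$. Since $P$ is simple and $3$-face-colored, every vertex of $P$ belongs to exactly one face of each color, which gives the three identities $s_i := \sum_{f \in V_i} \xi_f = \mathbf{1}$ in $\mathbb{R}^{2n}$. This yields two obvious linear dependences $s_1 - s_2 = 0$ and $s_2 - s_3 = 0$, so $\dim \mathrm{span}(S) \le n$.

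The main step is to show these are the only dependences, i.e.\ $\dim \mathrm{span}(S) = n$. Given $\sum_f c_f \xi_f = 0$, evaluation at a vertex $v$ (with incident faces $f_1(v), f_2(v), f_3(v)$ of the three colors) yields $c(f_1(v)) + c(f_2(v)) + c(f_3(v)) = 0$. For an edge $e = v_1 v_2$ whose two incident faces have colors $i, j$, comparing the vertex equations at $v_1$ and $v_2$ shows that the color-$k$ faces at the two endpoints (with $\{i,j,k\} = \{1,2,3\}$) have equal $c$-value. Thus crossing such an $(i,j)$-edge preserves the $c$-value of the ambient color-$k$ face; crossing an edge that already lies in some color-$k$ face does not change the color-$k$ face at the vertex at all.

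To propagate this to a global statement, given two color-$k$ faces $h$ and $h'$, pick $v \in h$ and $v' \in h'$ and connect them by a walk in the $1$-skeleton of $P$, which is connected. At each edge of the walk one of the two situations above applies, so the $c$-value of the ambient color-$k$ face is preserved throughout; chaining the equalities gives $c(h) = c(h')$. Hence $c$ is constant on each $V_i$, say $c|_{V_i} = \alpha_i$, and the vertex equation forces $\alpha_1 + \alpha_2 + \alpha_3 = 0$. The dependence space is therefore exactly $2$-dimensional, confirming $\dim \mathrm{span}(S) = n$.

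With the rank in hand, $\dim \widetilde{P}$ equals $\dim \mathrm{span}(S)$ if $0 \in \mathrm{aff}(S)$ and $\dim \mathrm{span}(S) - 1$ otherwise. Now $0 \in \mathrm{aff}(S)$ iff one can arrange $\sum_f c_f \xi_f = 0$ with $\sum_f c_f \neq 0$. Using the parametrization $c|_{V_i} = \alpha_i$ with $\alpha_1 + \alpha_2 + \alpha_3 = 0$, I compute $\sum_f c_f = \alpha_1 m_1 + \alpha_2 m_2 + \alpha_3 m_3$, a linear functional that vanishes identically on the plane $\alpha_1 + \alpha_2 + \alpha_3 = 0$ exactly when $(m_1, m_2, m_3)$ is proportional to $(1,1,1)$, i.e.\ $m_1 = m_2 = m_3$. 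In that balanced case $0 \notin \mathrm{aff}(S)$ and $\dim \widetilde{P} = n - 1$; otherwise $\dim \widetilde{P} = n$. The main obstacle is the propagation argument in the middle paragraphs: the linear algebra on either side is routine, but translating the per-vertex equations into global constancy on color classes crucially uses the connectedness of the $1$-skeleton of $P$ together with the dichotomy that every edge is either contained in a color-$k$ face or is an $(i,j)$-edge.
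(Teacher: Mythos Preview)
Your proof is correct and takes a genuinely different route from the paper's. The paper imports the fact $\operatorname{rank}_{\mathbb{R}} S = n$ from \cite{cly}, sandwiches $\dim\widetilde P$ via Sylvester's rank inequality, and then decides between $n-1$ and $n$ by appending a coordinate $1$ to each $\xi_f$ and checking when the two distinguished relations lift. You instead determine the entire dependence space of $\{\xi_f\}$ from scratch: the propagation argument along edges of the $1$-skeleton is a clean, self-contained proof that every linear relation is constant on color classes, which both recovers $\operatorname{rank} S = n$ and identifies the kernel explicitly. Your finishing step---testing whether the linear functional $\alpha\mapsto\sum_i m_i\alpha_i$ vanishes on the plane $\alpha_1+\alpha_2+\alpha_3=0$---is equivalent to the paper's lifted-vector computation but is arguably more transparent. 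The trade-off: the paper's argument is shorter if one accepts the cited rank result, while yours is fully self-contained and makes the role of the $3$-coloring combinatorics explicit.
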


\begin{proof}
Set 
$$A=
\begin{bmatrix}
1& 0 &\cdots& 0&-1\\
0&1&\cdots& 0&-1\\
\vdots&&\ddots &\vdots&-1\\
0&&\cdots&1&-1
\end{bmatrix}_{(n+1)\times (n+2)}
$$

$$
A\begin{bmatrix}
\xi_{f_1}\\
\vdots\\
\xi_{f_{n+2}}
\end{bmatrix}= 
\begin{bmatrix}
\xi_{f_1}-\xi_{f_{n+2}}\\
\vdots\\
\xi_{f_{n+1}}-\xi_{f_{n+2}}
\end{bmatrix}
$$

Then $\dim \w{P} = \rank_{\R}
\begin{bmatrix}
\xi_{f_1}-\xi_{f_{n+2}}\\
\vdots\\
\xi_{f_{n+1}}-\xi_{f_{n+2}}
\end{bmatrix} = \rank_{\R} (A\begin{bmatrix}
\xi_{f_1}\\
\vdots\\
\xi_{f_{n+2}}
\end{bmatrix} )$.

As an analogy of corollary 5.4 in  \cite{cly} , $\text{Rank}_{\R} S = n.$
Applying the Sylvester rank inequality, 
$$ (n+1)+n-(n+2) \leq \dim \w{P}  \leq \min\{ n+1, n \},$$
i.e., 
$$ n-1 \leq \dim \w{P}  \leq n.$$

Now we claim that $ \dim \w{P}=n-1$ if and only if $m_1=m_2=m_3$.


Notice that $\sum_{f\in V_i} \xi_f =\underline{1} , i=1,2,3,$ where $\underline{1}$ is the $2n$-entries vector whose entries are all 1(\cite{cly} Proposition 5.6 (2)).
Let $\w{\xi_f}$ be the vector obtained by adding another entry 1 at the end of $\xi_f$.
It's easy to check that $\rank \{ \w{\xi_f} ~|~ f \} =\dim \w{P} +1.$ Hence

$$ \sum_{f\in V_i} \w{\xi_f}= (\underline{1}, m_i), i=1,2,3.$$

Choose arbitrarily two vectors $v$ and $w$ in $V_2$ and $V_3$ respectively. It's easy to check that 
$\{ \xi_f ~|~ f\}-\{v,w\}$ are linearly independent and $v, w$ can be uniquely written as linear combinations:
$v=\sum_{f\in V_1} \xi_f -\sum_{g\in V_2-\{v\}} \xi_g, 
w=\sum_{f\in V_1} \xi_f -\sum_{g\in V_3-\{w\}} \xi_g.$

$\dim \w{P}=n-1 \Leftrightarrow \rank \{ \w{\xi_f} ~|~ f\} =n \Leftrightarrow \text{ both } \w{v} \text{ and } \w{w}$  can be uniquely  written as linear combinations of $\{ \w{\xi_f} ~|~ f\}-\{\w{v}, \w{w}\}  \Leftrightarrow  
\w{v} = \sum_{f\in V_1} \w{\xi_f} -\sum_{g\in V_2-\{v\}} \w{\xi_g}=(v, m_1- m_2+1), 
\w{w}=\sum_{f\in V_1} \w{\xi_f} -\sum_{g\in V_3-\{w\}} \w{\xi_g} =(w, m_1-m_3+1)
 \Leftrightarrow m_1=m_2=m_3.$ 
\end{proof}







\begin{rem} 

$\w{P}$ is a convex polytope of $n+2$ vertices and of dimension $n$ or $n-1$.
\end{rem}

\section{ Gale transform and Gale diagram}

Gale transform and Gale diagram is a effective tool to investigate the combinatorics of polytope with few vertices. The reader is referred  to \cite{g} for the details.

In this section we give some key definitions and results.
The notation here on Gale transform and Gale diagram follows \cite{g}. 

\begin{defn}
Let $V=\{v_1,\cdots, v_n\}$ be a set of (column)vectors in an Euclidean space and
 $\{\beta_1, \cdots, \beta_{n-d-1}\}$ be a basis of the null space of matrix 
$$
\begin{bmatrix} 
1& \cdots & 1 \\
 v_1& \cdots &v_n 
\end{bmatrix},
$$ where $d=\dim \text{conv } V$(i.e., the rank of the matrix minus one).

Denoted by $\bar{v}_j\in \R^{n-d-1}$ the $j$-th row of the matrix $\begin{bmatrix} \beta_1,& \cdots, & \beta_{n-d}\end{bmatrix}$, $ j=1,2,\cdots, n.$ 
The multi-set $\overline{V}=\{\bar{v}_i ~|~ i=1,\cdots, n\}$ is called a \emph{Gale transform} of $V=\{v_1, \cdots, v_n\}$. 

 Normalizing elements in $\overline{V}$ onto the unit sphere $S^{n-d-2}\subseteq \R^{n-d-1},$ we get a \emph{Gale diagram} of $V$.

\end{defn}

 Notation:
  (a) Let $J$ be a subset of $\{1,2,\cdots n\}$. We shall write $V(J)=\{v_j ~|~ j\in J\}$ and $\overline{V(J)}=\{ \bar{v}_j ~|~ j\in J\}$.
  
  (b) Let $A$ be a subset of an Euclidean space. Denote by \emph{relint}$(A)$ the set of relative interior points of $A$.
  
  
  \begin{defn}
  Let $Q$ be a convex polytope with vertex set $V=\{v_1,\cdots,v_n\}$ and $J\subseteq \{1,2,\cdots,n\}$. $V(J)$ is called a \emph{coface} of $Q$ if $V(J^c)$ is the vertices set of a face of $Q$, where $J^c$ is the complement of $J$.
  \end{defn}
 
\begin{thm}[\cite{g} p.88]\label{gd}
 Let $Q$ be a convex polytope with vertex set $V=\{v_1,\cdots,v_n\}$ and $\emptyset\neq J\subseteq \{1,2,\cdots,n\}$.  
Then  $V(J)$ is a coface of $Q$ if and only if 
$0\in \text{relint}( \text{conv }\overline{V(J)})$
\end{thm}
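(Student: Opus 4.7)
The plan is to set up a correspondence between affine functionals on $\R^d$ and linear relations among the Gale vectors, and then read both sides of the biconditional as two faces of that correspondence.

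First I would fix the matrix
$$M=\begin{bmatrix} 1 & \cdots & 1 \\ v_1 & \cdots & v_n \end{bmatrix}$$
and set $B=[\beta_1,\ldots,\beta_{n-d-1}]$, so that the columns of $B$ form a basis of $\ker M$ and the $j$-th row of $B$ is $\bar v_j$. The key linear-algebraic observation is that for any $\alpha=(\alpha_1,\ldots,\alpha_n)\in\R^n$,
$$\sum_{j=1}^n\alpha_j\bar v_j=0 \iff B^T\alpha=0 \iff \alpha\in(\ker M)^\perp = \mathrm{row}(M).$$
Unpacking the last condition, $\alpha\in\mathrm{row}(M)$ means precisely that $\alpha_j=c+\langle a,v_j\rangle$ for some scalar $c\in\R$ and vector $a\in\R^d$; in other words, $\alpha_j=\phi(v_j)$ for a unique affine functional $\phi$ on $\R^d$.

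With this dictionary in hand, both directions become routine. The condition $0\in\mathrm{relint}(\mathrm{conv}\,\overline{V(J)})$ is equivalent to the existence of coefficients $\alpha_j>0$ for $j\in J$ (extended by $\alpha_j=0$ for $j\notin J$) satisfying $\sum_j\alpha_j\bar v_j=0$. Via the correspondence above, this translates to: there exists an affine functional $\phi$ on $\R^d$ with $\phi(v_j)>0$ for every $j\in J$ and $\phi(v_j)=0$ for every $j\notin J$. But this is exactly what it means for the hyperplane $\{v:\phi(v)=0\}$ to be a supporting hyperplane of $Q$ meeting $Q$ in precisely the face whose vertex set is $V(J^c)$, i.e., for $V(J^c)$ to be the vertex set of a face of $Q$ and hence $V(J)$ a coface. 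Running the correspondence in the reverse direction gives the converse implication.

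The main technical care will be needed in three places: (i) passing between convex-combination coefficients that sum to $1$ and arbitrary strictly positive coefficients, which is legitimate because the relation $\sum_j\alpha_j\bar v_j=0$ is homogeneous and any positive rescaling preserves it; (ii) checking that the affine functional $\phi$ produced on the Gale side is actually non-constant, so that $\{\phi=0\}$ is a genuine supporting hyperplane and the face it cuts out has vertex set exactly $V(J^c)$ rather than a proper subset of it; this uses $J\neq\emptyset$ together with the fact that the $v_j$ are vertices of $Q$; and (iii) the degenerate boundary case $J^c=\emptyset$, where one interprets $V(J^c)$ as the improper empty face of $Q$ and verifies $0\in\mathrm{relint}(\mathrm{conv}\,\overline{V})$ directly from $\sum_j\bar v_j=0$, which corresponds under the dictionary to the constant functional $\phi\equiv 1$ lying in $\mathrm{row}(M)$. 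Once these points are dispensed with, the theorem is just the supporting-hyperplane description of faces rephrased through the Gale correspondence.
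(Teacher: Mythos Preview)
The paper does not prove this statement; it is quoted from Gr\"unbaum's book (reference~\cite{g}, p.~88) and used as a black box. Your argument is correct and is essentially the standard proof one finds in that reference: the duality between affine functionals on $\R^d$ and vectors in $\mathrm{row}(M)=(\ker M)^\perp$, followed by the supporting-hyperplane description of faces, is exactly how the result is established there. The three technical checks you flag are the right ones, and your handling of them is sound; in particular, the non-constancy of $\phi$ in~(ii) follows immediately from $J\neq\emptyset$ as you indicate, and~(iii) is disposed of by the identity $\sum_j\bar v_j=0$ coming from the first row of $M$.
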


\begin{defn}
A convex polytope $Q$ is \emph{simplicial} if all the proper faces of $Q$ are simplexes.

A convex polytope $Q$ is $k$-neighborly if  any choose of $k$ vertices of $Q$ is exactly the vertex set of a proper face of $Q$. 

A $d$-\emph{pyramid} $Q$ is the convex hull of the union of a $(d-1)$-polytope $K$(\emph{basis} of the pyramid) and a point $A$(\emph{apex} of the pyramid), where the point $A$ does not belong to the affine space aff$ (K)$ of $K$.
\end{defn}

\begin{thm}[\cite{g} p.88 and \cite{gs} Theorem 20]\label{ct}
Let $V$ be the set of vertices of  a convex $d$-polytope $Q$. Then
\begin{itemize}
\item[(1)] $Q$ is simplicial if and only if 
$0\notin \text{relint}( \text{conv}(\overline{V}\cap H)$ for every hyperplane $H$ in $\R^{n-d-1}$ containing $0$.
\item[(2)]  $Q$ is a pyramid if and only if $0\in \overline{V}$. Furthermore, if $0\in \overline{V}$ then $P$ is a pyramid at every $v_i\in V$ such that $\bar{v}_i=0$.
\end{itemize}
\end{thm}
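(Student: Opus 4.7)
The plan is to reduce both items of Theorem \ref{ct} to Theorem \ref{gd} together with a linear-algebraic dual fact about the Gale transform. Part (2) I would handle first as a direct corollary of Theorem \ref{gd}: $Q$ is a $d$-pyramid with apex $v_i$ exactly when $V\setminus\{v_i\}$ is the vertex set of a face of $Q$ (and such a face must be a facet, by a dimension count using $v_i\notin\text{aff}(V\setminus\{v_i\})$), i.e., when $\{v_i\}$ is a coface. Applying Theorem \ref{gd} with $J=\{i\}$ gives the equivalence $0\in\text{relint}(\text{conv}\{\bar v_i\})=\{\bar v_i\}$, that is, $\bar v_i=0$. The ``furthermore'' clause follows because the equivalence runs index by index.

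For part (1), the first step is a dual lemma drawn from the definition of $\overline V$ as the row array of a matrix $B$ whose columns span the nullspace of $\begin{bmatrix}1 & \cdots & 1\\ v_1 & \cdots & v_n\end{bmatrix}$. A direct rank computation converts affine dependencies on a subset $V(J^c)$ into linear relations annihilating $\overline{V(J)}$, and conversely; one deduces that $V(J^c)$ is affinely independent if and only if $\overline{V(J)}$ linearly spans $\R^{n-d-1}$. Combined with Theorem \ref{gd}, this yields the internal characterization: $Q$ is simplicial if and only if every coface $V(J)$ has $\overline{V(J)}$ spanning $\R^{n-d-1}$. The $(\Rightarrow)$ direction of the hyperplane reformulation is then immediate: assume some hyperplane $H\ni 0$ has $0\in\text{relint}(\text{conv}(\overline V\cap H))$ and set $J=\{i:\bar v_i\in H\}$; by Theorem \ref{gd}, $V(J)$ is a coface, but $\overline{V(J)}\subseteq H$ cannot span, so the corresponding face $V(J^c)$ fails to be a simplex.

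The $(\Leftarrow)$ direction is where I expect the main obstacle. If $Q$ is not simplicial, the internal characterization yields a coface $V(J)$ with $\overline{V(J)}$ lying in some hyperplane $H$ through $0$ and witnessing $0\in\text{relint}(\text{conv}\,\overline{V(J)})$. The theorem demands the stronger conclusion $0\in\text{relint}(\text{conv}(\overline V\cap H))$, so one must absorb every additional $\bar v_i\in H$ into a strict positive representation of $0$. My approach is to choose $J$ maximal and $H$ minimal so that $\overline{V(J)}=\overline V\cap H$, and then inductively absorb each extra vector $\bar v_i\in H$ by expressing it as a linear combination of the current $\overline{V(J)}$ and perturbing the positive coefficients. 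The delicate point requiring the most care is that this perturbation never drives any coefficient to zero; this rests on an application of Theorem \ref{gd} in the lower-dimensional ambient hyperplane $H$, which guarantees that $\overline V\cap H$ itself positively represents $0$ with all strictly positive weights.
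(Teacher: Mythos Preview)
The paper does not prove Theorem~\ref{ct}; it is quoted from Gr\"unbaum (\cite{g}, p.~88) and Gr\"unbaum--Shephard (\cite{gs}, Theorem~20) and used thereafter as an external input. There is no in-paper argument to compare your proposal against.

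On the outline itself: part~(2) and the $(\Rightarrow)$ half of part~(1) are correct and essentially the standard reductions to Theorem~\ref{gd}. In the $(\Leftarrow)$ half of~(1) you correctly isolate the real difficulty, but the patch you propose is not valid as stated. The phrase ``apply Theorem~\ref{gd} in the lower-dimensional ambient hyperplane $H$'' has no content here: Theorem~\ref{gd} speaks about a polytope and its Gale transform, not about an arbitrary finite configuration sitting inside $H$, so it cannot by itself certify that $\overline V\cap H$ positively represents $0$. Moreover, your absorption step ``express each extra $\bar v_i\in H$ as a linear combination of the current $\overline{V(J)}$'' presupposes $\bar v_i\in\operatorname{span}\overline{V(J)}$, which need not hold when $\operatorname{span}\overline{V(J)}\subsetneq H$.

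A clean repair avoids perturbation entirely. Set $W=\operatorname{span}\overline{V(J)}$, a proper subspace since $\overline{V(J)}$ fails to span; note $\operatorname{aff}\overline{V(J)}=W$ because $0\in\operatorname{conv}\overline{V(J)}$. Choose a hyperplane $H\supseteq W$ in general position so that no $\bar v_i\notin W$ lies in $H$ (possible since the $\bar v_i$ are finitely many). Then $\overline V\cap H=\{\bar v_i:\bar v_i\in W\}$ has the same affine hull $W$ as $\overline{V(J)}$, and since $\operatorname{relint}(\operatorname{conv}\,\overline{V(J)})$ is open in $W$ and contained in $\operatorname{conv}(\overline V\cap H)$, it lies in $\operatorname{relint}(\operatorname{conv}(\overline V\cap H))$, yielding the desired hyperplane witness.
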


\begin{exam} 
The Gale diagram of the cyclic polytope $C(2k,2k-2)$ is 

\begin{figure}[!ht]
\centering
\begin{tikzpicture}[scale=0.5]
\path (5,0) coordinate (P3);
\path (10,0) coordinate (P1);
\path (15,0) coordinate (P2);

\draw[fill=green](P2) circle [radius=0.1];
\draw[fill=blue](P3) circle [radius=0.1];

\draw[->] (2,0)--(17,0);

\node[below] at (10,0) {$0$};
\node[above] at (P2) {$k$};
\node[below] at (P2) {$1$};
\node[above] at (P3) {$k$};
\node[below] at (P3) {$-1$};
\end{tikzpicture}
\end{figure}
since the cyclic polytope is simplicial and $(k-1)$-neighborly.

\end{exam}

\section{Combinatorics of $\w{P}$}

For results of  polytopes with few vertices, the reader is referred to  \cite{g}. 



\begin{prop}\label{1}
Suppose $m_3> m_2>m_1\geq 2$. Then the Gale diagram of $\w{P}$ is 

\begin{figure}[!ht]
\centering
\begin{tikzpicture}[scale=0.5]
\path (5,0) coordinate (P3);
\path (15,0) coordinate (P2);

\draw[fill=green](P2) circle [radius=0.1];
\draw[fill=blue](P3) circle [radius=0.1];

\draw[->] (2,0)--(17,0);

\node[below] at (10,0) {$0$};
\node[above] at (P2) {$m_2$};
\node[below] at (P2) {$1$};
\node[above] at (P3) {$m_1+m_3$};
\node[below] at (P3) {$-1$};
\end{tikzpicture}
\end{figure}
And the vertices of $\w{P}$ colored by 1 or 3 corresponding to point -1, and the vertices colored by 2 are corresponding to point 1. Hence 
\begin{itemize}
\item[(1)] conv $V(J)$ is a proper face of $\w{P}$ if and only if $V_2\not\subseteq V(J)$ and $V_1\cup V_3 \not\subseteq V(J)$;
\item[(2)] $\w{P}$ is a simplicial convex polytope.
\end{itemize}
Furthermore, 
\begin{itemize}
\item[(3)] $\w{P} \cong T_{m_2-1}^n:=\text{conv}(v,\Delta^n)$, where $\Delta^n$ is an $n$-simplex, $v\notin \Delta^n$ and $v$ is beyond $m_2-1$ facets of $\Delta^n$.
\end{itemize}
\end{prop}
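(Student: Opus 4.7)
The plan is to compute the Gale transform of $\w{P}$ explicitly, read off the combinatorics from Theorems~\ref{gd} and \ref{ct}, and match the resulting Gale diagram against that of $T_{m_2-1}^n$. Because not all $m_i$ are equal, Theorem~\ref{dim} gives $\dim\w{P}=n$, so the affine-lift matrix, whose top row is $(1,\dots,1)$ and whose remaining rows list the coordinates of $\xi_{f_1},\dots,\xi_{f_{n+2}}$, has rank $n+1$ and a one-dimensional kernel. I search for a null vector $\lambda=(\lambda_f)$ that is constant on each color class, $\lambda_f=a_i$ for $f\in V_i$. Using $\sum_{f\in V_i}\xi_f=\underline{1}$, the kernel equations collapse to the pair $m_1a_1+m_2a_2+m_3a_3=0$ and $a_1+a_2+a_3=0$, whose one-dimensional solution space is spanned by $(m_3-m_2,\,m_1-m_3,\,m_2-m_1)$. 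Under $m_1<m_2<m_3$ all three entries are nonzero, with $a_1,a_3$ of one sign and $a_2$ of the other, so normalization onto $S^0$ sends vertices of $V_1\cup V_3$ to one endpoint (multiplicity $m_1+m_3$) and those of $V_2$ to the other (multiplicity $m_2$); flipping orientation recovers the picture in the proposition.

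Parts (1) and (2) then follow directly. By Theorem~\ref{gd}, $\text{conv}\,V(J)$ is a proper face iff $0\in\text{relint}(\text{conv}\,\overline{V(J^c)})$; since $\overline{V(J^c)}\subseteq\{-1,+1\}$ this holds exactly when both endpoints appear in $J^c$, i.e.\ when $V(J^c)$ meets both $V_2$ and $V_1\cup V_3$, which is the stated condition on $V(J)$. The only hyperplane through $0$ in $\mathbb{R}^1$ is $\{0\}$ itself, and it misses every point of $\overline{V}$ since each $a_i$ is nonzero, so Theorem~\ref{ct}(1) gives simpliciality.

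For (3), I compute the Gale transform of $T_{m_2-1}^n$ directly. Place $\Delta^n=\text{conv}(e_1,\dots,e_{n+1})\subseteq\mathbb{R}^{n+1}$ in the hyperplane $\sum x_i=1$, so the facet opposite $e_i$ is cut out by $x_i=0$; choose an apex $v$ with $v_i<0$ for $i\le m_2-1$, $v_i>0$ for $i\ge m_2$, and $\sum v_i=1$, realizing $v$ as beyond exactly the first $m_2-1$ facets. A direct kernel calculation for the corresponding lift matrix gives Gale coefficients $\lambda_i=-v_i$ for $1\le i\le n+1$ and $\lambda_{n+2}=1$ on the apex, so $e_1,\dots,e_{m_2-1}$ share a sign with $v$ (total $m_2$) while $e_{m_2},\dots,e_{n+1}$ take the opposite sign (total $n+2-m_2=m_1+m_3$). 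This is the same signed configuration on $S^0$ as the Gale diagram of $\w{P}$; since Theorem~\ref{gd} determines the face lattice from this data, $\w{P}\cong T_{m_2-1}^n$ combinatorially. The only genuinely non-mechanical step is spotting the color-constant ansatz for the null vector in step~(1); everything else is linear-algebra bookkeeping.
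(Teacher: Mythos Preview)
Your argument is correct. The Gale-transform computation via the color-constant ansatz is exactly what the paper has in mind when it asserts without detail that ``a Gale transform is $\{1-k,\dots,1-k,k,\dots,k,-1,\dots,-1\}$'' with $k=(m_3-m_1)/(m_2-m_1)$; your vector $(m_3-m_2,\,m_1-m_3,\,m_2-m_1)$ is the same up to the scalar $-(m_2-m_1)$. Parts (1) and (2) then proceed identically in both write-ups.

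Part (3) is where you diverge. The paper quotes the classification of simplicial $n$-polytopes with $n+2$ vertices as $T_k^n$ for some $k$, then pins down $k$ geometrically: it removes a vertex $v_0\in V_2$, observes that $\mathrm{conv}(V\setminus\{v_0\})$ is an $n$-simplex, and uses the coface criterion from~(1) to count that $v_0$ is beyond exactly the $m_2-1$ facets opposite the remaining color-2 vertices. You instead build an explicit coordinate model of $T_{m_2-1}^n$, compute its one-dimensional Gale transform directly ($\lambda_i=-v_i$, $\lambda_{n+2}=1$), and match signed multiplicities on $S^0$. Both are valid; your route is self-contained and keeps everything inside the Gale-diagram formalism, while the paper's route has the advantage of identifying the simplex $\Delta^n$ and the apex concretely inside $\w P$ itself.
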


\begin{proof}
By the theorem \ref{dim}, $\dim \w{P} =n  $ and $\w{P}$ has $n+2$ vertices. The Gale transform of $\V(\w{P})=S$ lies in a line.

It's easy to show that a Gale transform is 
$\{1-k,\cdots,1-k, k,\cdots, k, -1,\cdots,-1\}$
where $1-k, k$ and $-1$ are corresponding to vertices in $\V(\w{P})$  colored by $1,2 $ and $3$ respectively and $k=\frac{m_3-m_1}{m_2-m_1}>1$. Hence the multiplicities of $1-k$, $k$ and $-1$ are $m_1, m_2, m_3$ respectively.

\begin{figure}[!ht]
\centering
\begin{tikzpicture}[scale=0.5]
\path (5,0) coordinate (P3);
\path (7,0) coordinate (P1);
\path (15,0) coordinate (P2);

\draw[fill=red](P1) circle [radius=0.1];
\draw[fill=green](P2) circle [radius=0.1];
\draw[fill=blue](P3) circle [radius=0.1];

\draw[->] (2,0)--(17,0);

\node[below] at (10,0) {$0$};
\node[above] at (P1) {$m_1$};
\node[below] at (P1) {$1-k$};
\node[above] at (P2) {$m_2$};
\node[below] at (P2) {$k$};
\node[above] at (P3) {$m_3$};
\node[below] at (P3) {$-1$};
\end{tikzpicture}
\end{figure}

(1) Using the criterion in theorem \ref{gd}, we know that $V(J^c)$ is not a proper coface of $P\Leftrightarrow 0\notin \text{relint conv } \overline{V(J^c)} \Leftrightarrow$ either $ V_2\subseteq J^c$ or $V_1\cup V_3 \subseteq J^c$. Thus (1) follows.

(2) By the thereom \ref{ct} (1), we get that $\w{P}$ is a simplicial convex polytope.

(3) Since $\w{P}$ is simplicial $n$-polytope with $n+2$ vertices, $\w{P}$ must combinatorial equivalent to $T_k^n$ for some $k\in \{1,2, \cdots, [{n\over 2}]\}$.

Choose a vertex $v_0\in V_2$, it easy to check that conv$(V\setminus \{v_0\})$ is an $n$-simplex since $|V\setminus\{v_0\}|=n+1$ and affine dimension of $V\setminus\{v_0\}$ is $n$. Denote the $n$-simplex conv$(V\setminus \{v_0\})$ by $\Delta^n$. Obviously, $v_0\notin \Delta^n$.

Since $\{v_0,v\}$ is not a coface of $\w{P}$ for any $v\in V_2\setminus \{v_0\}$, i.e., conv$(V(\w{P})-\{v_0,v\})$ is a facet of $\Delta^n$ but not a face of $\w{P}$, $v_0$ must be beyond the $n-1$ simplex of $\Delta^n$ which does not contain (opposite to) $v$. On the other side, $\{v_0, w\}$ is a coface of $\w{P}$ for any vertex $w$ of $\w{P}$ colored by 1 or 3. Hence $v_0$ is not beyond such corresponding facet of $\Delta^n$.
Therefore, $v_0$ is beyond exact $m_2-1$ facet of $\Delta^n$, i.e., 
 $\w{P} \cong T_{m_2-1}^n$.

\end{proof}


\begin{prop}\label{2}
Suppose $m_3=m_2>m_1\geq 2$.  Then the Gale diagram of $\w{P}$ is 

\begin{figure}[!ht]
\centering
\begin{tikzpicture}[scale=0.5]
\path (5,0) coordinate (P3);
\path (10,0) coordinate (P1);
\path (15,0) coordinate (P2);

\draw[fill=red](P1) circle [radius=0.1];
\draw[fill=green](P2) circle [radius=0.1];
\draw[fill=blue](P3) circle [radius=0.1];

\draw[->] (2,0)--(17,0);

\node[below] at (10,0) {$0$};
\node[above] at (P1) {$m_1$};
\node[above] at (P2) {$m_2$};
\node[below] at (P2) {$1$};
\node[above] at (P3) {$m_3$};
\node[below] at (P3) {$-1$};
\end{tikzpicture}
\end{figure}

Hence
\begin{itemize}
\item[(1)] Conv$(J)(J\subsetneq S)$ is a proper face of $\w{P}$ if and only if either $V_2, V_3\not\subseteq J$ or $V_2\cup V_3 \subseteq J$;
\item[(2)] $\w{P}$ is NOT a simplicial convex polytope;
\end{itemize}
Furthermore, 
\begin{itemize}
\item[(3)] $\w{P}\cong T_{m_2-1}^{n,m_1}$ is an $m_1$-fold $n$-pyramid  with $C(2m_2,2m_2-2)$, which is a $(2m_2-2)$-dimensional cyclic polytope  with $2m_2$ vertices.
\end{itemize}
\end{prop}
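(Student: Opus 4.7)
The plan is to compute the Gale transform of $\w{P}$ explicitly, then read off (1) and (2) directly from Theorems \ref{gd} and \ref{ct}, and finally match the Gale diagram of the ``base'' against the cyclic polytope example to conclude (3).

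Since $m_1<m_2=m_3$, Theorem \ref{dim} gives $\dim\w{P}=n$, so the Gale transform lies in $\R^{(n+2)-n-1}=\R^{1}$ and the relevant null space is $1$-dimensional. To identify a spanning vector, I will exploit the relations $\sum_{f\in V_i}\xi_f=\underline{1}$ from \cite{cly} together with the hypothesis $m_2=m_3$: the assignment $\beta_f=0$ for $f\in V_1$, $\beta_f=+1$ for $f\in V_2$, $\beta_f=-1$ for $f\in V_3$ satisfies both $\sum_f\beta_f=m_2-m_3=0$ and $\sum_f\beta_f\xi_f=\underline{1}-\underline{1}=0$, and is therefore, up to scale, the unique Gale vector. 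This yields precisely the diagram displayed in the statement.

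Part (1) then follows directly from Theorem \ref{gd}: $\text{conv}\,V(J)$ is a proper face of $\w{P}$ iff $V(J^{c})$ is a coface iff $0\in\text{relint}(\text{conv}\,\overline{V(J^{c})})$. Since $\overline{V(J^{c})}\subseteq\{-1,0,+1\}$, this occurs in exactly two cases: either $\overline{V(J^{c})}\subseteq\{0\}$ (equivalently $J^{c}\subseteq V_1$, i.e.\ $V_2\cup V_3\subseteq J$), or $\overline{V(J^{c})}$ contains both $+1$ and $-1$ (equivalently $V_2\not\subseteq J$ and $V_3\not\subseteq J$). For part (2), I can invoke Theorem \ref{ct}(2) since $0\in\overline{V}$, or equivalently note that in $\R^{1}$ the only hyperplane through the origin is $H=\{0\}$, for which $\overline{V}\cap H=\overline{V_1}$ and $0\in\text{relint}(\text{conv}\,\overline{V_1})=\{0\}$, so the simpliciality criterion Theorem \ref{ct}(1) fails.

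For part (3), Theorem \ref{ct}(2) identifies $\w{P}$ as a pyramid at every $v\in V_1$ (since $\bar{v}=0$ for each such $v$), and iterating gives an $m_1$-fold pyramid. The base is the convex hull of the remaining $m_2+m_3=2m_2$ vertices, and restricting the Gale transform to those vertices leaves exactly $m_2$ copies of $+1$ together with $m_2$ copies of $-1$. This is precisely the Gale diagram of $C(2m_2,2m_2-2)$ from the example in Section 3, so the base is combinatorially equivalent to $C(2m_2,2m_2-2)$, which yields $\w{P}\cong T_{m_2-1}^{n,m_1}$. The step I expect to be most delicate is the passage ``identical Gale diagram $\Rightarrow$ combinatorially equivalent bases''; this is standard via Theorem \ref{gd}, since the face lattice is completely determined by the coface condition read off the Gale diagram, but I should also confirm the dimension check $n-m_1=m_2+m_3-2=2m_2-2$, matching $\dim C(2m_2,2m_2-2)$.
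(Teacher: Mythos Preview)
Your proposal is correct and follows essentially the same approach as the paper: compute the one-dimensional Gale transform explicitly (you are in fact more careful here than the paper, which merely asserts the transform), then read off (1) via Theorem~\ref{gd}, (2) via Theorem~\ref{ct}(1), and (3) by observing that the zeros in the Gale diagram force an $m_1$-fold pyramid whose base has the Gale diagram of $C(2m_2,2m_2-2)$. One small caution: your first justification for (2), ``invoke Theorem~\ref{ct}(2) since $0\in\overline{V}$'', only shows $\w{P}$ is a pyramid, which by itself does not preclude simpliciality (a simplex is a simplicial pyramid); your second argument via the hyperplane $H=\{0\}$ is the one that actually works, and it is exactly what the paper uses.
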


\begin{proof}

By the theorem \ref{dim}, $\dim \w{P} =n  $ and $\w{P}$ has $n+2$ vertices. The Gale transform of $\V(\w{P})=S$ lies in a line.

The Gale transform is 
$\{0,\cdots,0, 1,\cdots, 1,  -1,\cdots,-1\}$, 
where $0, 1$ and $-1$ are corresponding to vertices in $\V(\w{P})$  colored by $1,2 $ and $3$ respectively 
Hence the multiplicities of $0$, 
$1$ and $-1$ are $m_1, m_2, m_3$ respectively.

(1) Using the criterion in theorem \ref{gd}, we know that conv$(J^c)$ is a proper coface of $P\Leftrightarrow 0\in \text{relint conv } \overline{V(J^c)} \Leftrightarrow$ either $J^c\subseteq V_1$ or   $  J^c \cap V_j\not= \emptyset, j=2,3$. Thus (1) follows.

(2) By the thereom \ref{ct} (1), we get that $\w{P}$ is not a simplicial convex polytope.




(3) Consider the convex hull $Q$ of vertices of $\w{P}$ colored by 2 and 3. Then $Q$ is a face of $\w{P}$ by (1). Hence the Gale diagram of $Q$ is

\begin{figure}[!ht]
\centering
\begin{tikzpicture}[scale=0.5]
\path (5,0) coordinate (P3);
\path (10,0) coordinate (P1);
\path (15,0) coordinate (P2);

\draw[fill=red](P1) circle [radius=0.1];
\draw[fill=green](P2) circle [radius=0.1];
\draw[fill=blue](P3) circle [radius=0.1];

\draw[->] (2,0)--(17,0);

\node[below] at (10,0) {$0$};
\node[above] at (P2) {$m_2$};
\node[below] at (P2) {$1$};
\node[above] at (P3) {$m_3$};
\node[below] at (P3) {$-1$};
\end{tikzpicture}
\end{figure}
So $\dim Q=2m_2-2$ and $Q\cong C(2m_2,2m_2-2).$

By the theorem \ref{ct} (2), $\w{P}$ is an $m_1$-fold $n$-pyramid with basis $Q$.
\end{proof}

Similarly, we can prove
\begin{prop}\label{3}
Suppose $m_3> m_2=m_1\geq 2$. Then the Gale diagram of $\w{P}$ is 
\begin{figure}[!ht]
\centering
\begin{tikzpicture}[scale=0.5]
\path (5,0) coordinate (P3);
\path (10,0) coordinate (P1);
\path (15,0) coordinate (P2);

\draw[fill=red](P1) circle [radius=0.1];
\draw[fill=green](P2) circle [radius=0.1];
\draw[fill=blue](P3) circle [radius=0.1];

\draw[->] (2,0)--(17,0);

\node[below] at (10,0) {$0$};
\node[above] at (P1) {$m_3$};
\node[above] at (P2) {$m_2$};
\node[below] at (P2) {$1$};
\node[above] at (P3) {$m_1$};
\node[below] at (P3) {$-1$};
\end{tikzpicture}
\end{figure}

Hence
\begin{itemize}
\item[(1)] $J\subsetneq S$ is a proper face of $\w{P}$ if and only if either $V_1, V_2\not\subseteq J$ or $V_1\cup V_2\subseteq J$;
\item[(2)] $\w{P}$ is NOT a simplicial convex polytope;
\item[(3)] $\w{P} \cong T_{m_2-1}^{n,m_3}$ is an $m_3$-fold $n$-pyramid  with $C(2m_2,2m_2-2)$.
\end{itemize}
\end{prop}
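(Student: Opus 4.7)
My plan is to follow the same template as the proof of Proposition \ref{2}, with the roles of colors $1$ and $3$ interchanged. By Theorem \ref{dim}, $\dim \w{P} = n$ (since the $m_i$ are not all equal) and $\w{P}$ has $n+2$ vertices, so the Gale transform lives on a line, i.e.\ in $\R^{1}$. To pin it down explicitly, I would exploit the identities $\sum_{f \in V_1} \xi_f = \sum_{f \in V_2} \xi_f = \underline{1}$ together with the hypothesis $m_1 = m_2$. These produce a nonzero element of the one-dimensional null space of the matrix with rows $(1,\ldots,1)$ and $(\xi_{f_1},\ldots,\xi_{f_{n+2}})$ that takes the value $-1$ on vertices of color $1$, $+1$ on vertices of color $2$, and $0$ on vertices of color $3$. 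Since the null space is one-dimensional, this already determines the whole Gale transform: $m_1$ copies of $-1$, $m_2$ copies of $+1$, and $m_3$ copies of $0$, matching the depicted diagram up to sign.

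Claim (1) is then an application of Theorem \ref{gd}: the set $V(J^c)$ is a coface iff $0 \in \text{relint conv}\,\overline{V(J^c)}$, and on the real line this happens in exactly two ways, namely $\overline{V(J^c)} = \{0\}$ (equivalently $V_1 \cup V_2 \subseteq J$) or $\overline{V(J^c)}$ contains values of both signs (equivalently $V_1 \not\subseteq J$ and $V_2 \not\subseteq J$, using that color $1$ maps to $-1$ and color $2$ to $+1$). For claim (2), Theorem \ref{ct}(1) gives non-simpliciality immediately: the unique hyperplane through $0$ in $\R^1$ is $\{0\}$ itself, and it meets $\overline{V}$ in the $m_3 \geq 1$ points corresponding to color $3$.

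For (3), I would use that $0 \in \overline{V}$ occurs with multiplicity $m_3$. By Theorem \ref{ct}(2), $\w{P}$ is an $m_3$-fold pyramid whose apices are exactly the $m_3$ vertices of color $3$ and whose basis is $Q := \text{conv}(V_1 \cup V_2)$, which by (1) is indeed a face of $\w{P}$. The remaining step, and the only genuine subtlety, is to identify $Q$ combinatorially as the cyclic polytope $C(2m_2, 2m_2 - 2)$. For this I would restrict the Gale diagram of $\w{P}$ to $V_1 \cup V_2$, obtaining $m_1 = m_2$ copies of $-1$ together with $m_2$ copies of $+1$; this is precisely the Gale diagram of $C(2m_2, 2m_2 - 2)$ recorded in the earlier Example, and since the combinatorial type of a polytope is determined by its Gale diagram the identification follows. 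I expect this last identification to be the main obstacle, but it is resolved by quoting the Example rather than by any fresh calculation.
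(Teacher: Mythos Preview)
Your proposal is correct and follows essentially the same route as the paper, which simply writes ``Similarly, we can prove'' and intends exactly the argument you give: swap the roles of colors $1$ and $3$ in the proof of Proposition~\ref{2}, exhibit the single affine dependence $\sum_{f\in V_2}\xi_f-\sum_{f\in V_1}\xi_f=0$ (whose coefficient sum vanishes because $m_1=m_2$) to pin down the one-dimensional Gale transform, and then read off (1)--(3) from Theorems~\ref{gd} and~\ref{ct} together with the Example identifying $C(2m_2,2m_2-2)$ by its Gale diagram. The only step you leave slightly implicit---that deleting the $m_3$ vertices with $\bar v=0$ yields the Gale diagram of the base $Q$---is handled at the same level of detail in the paper's proof of Proposition~\ref{2}.
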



\begin{prop}\label{4}
Suppose $m_3=m_2=m_1\geq 2$. Then the Gale diagram of $\w{P}$ is 
\begin{figure}[!ht]
\centering
\begin{tikzpicture}
\path (0,-1) coordinate (P3);
\path (0.707,0.707) coordinate (P1);
\path (-1,0) coordinate (P2);

\draw (0,0) circle[radius=1];

\draw[fill=red](P1) circle [radius=0.1];
\draw[fill=green](P2) circle [radius=0.1];
\draw[fill=blue](P3) circle [radius=0.1];

\draw[-] (P2)--(1,0);
\draw[-] (P3)--(0,1);
\draw[-] (P1)--(-0.707,-0.707);

\node[below right] at (0,0) {$0$};
\node[right] at (P1) {$m_1$};
\node[left] at (P2) {$m_2$};
\node[below] at (P3) {$m_3$};
\end{tikzpicture}
\end{figure}

\begin{itemize}
\item[(1)] $J\subsetneq S$ is a proper face of $\w{P}$ if and only if  $V_i\nsubseteq J, i=1,2,3$;
\item[(2)] $\w{P}$ is a simplicial convex polytope and $(m_2-1)$-neigborly;
\item[(3)] $\w{P}$ is a convex hull of a point $w$ and a convex $(n-1)$-polytope $Q$, where $Q$ is an $(m_2-1)$-fold $(n-1)$-pyramid  with basis $C(2m_2,2m_2-2)$ and the point $w$ is just beyond exact $m_2-1$ non-simplex facet of $Q$.
\end{itemize}
\end{prop}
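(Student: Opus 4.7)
The plan is to mirror the proofs of Propositions \ref{1}--\ref{3}: first compute the Gale diagram of $\w{P}$ using the linear relations already isolated in the proof of Theorem \ref{dim}, then read off parts (1) and (2) via Theorems \ref{gd} and \ref{ct}, and finally prove (3) by showing that deleting a vertex $w$ yields a face $Q$ covered by Proposition \ref{2}.

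For the Gale diagram, the key input from Theorem \ref{dim} is that $\sum_{f\in V_1}\w{\xi_f}=\sum_{f\in V_2}\w{\xi_f}=\sum_{f\in V_3}\w{\xi_f}$, so the two vectors $\beta_1$ (with coordinates $1$ on $V_1$, $-1$ on $V_2$, $0$ on $V_3$) and $\beta_2$ (with coordinates $1$ on $V_1$, $0$ on $V_2$, $-1$ on $V_3$) span the $2$-dimensional null space forced by $\dim\w{P}=n-1$. Reading off the rows of $[\beta_1\ \beta_2]$, the Gale transform sends every vertex of $V_1,V_2,V_3$ to $(1,1),(-1,0),(0,-1)$ respectively, and normalization produces the pictured diagram. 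Part (1) then follows from Theorem \ref{gd}: any two of the three Gale positions span a line missing $0$, so $0\in\operatorname{relint}\operatorname{conv}\overline{V(J^c)}$ iff $\overline{V(J^c)}$ meets all three positions, i.e.\ iff $V_i\not\subseteq J$ for each $i$. For part (2), every line through $0$ in $\R^2$ contains at most one of the three (pairwise non-parallel) Gale positions, so $\operatorname{conv}(\overline{V}\cap H)$ is a single nonzero point for every hyperplane $H$ through $0$, and simpliciality follows from Theorem \ref{ct}(1); $(m_2-1)$-neighborliness is immediate from (1), since any $J$ with $|J|=m_2-1<|V_i|$ cannot contain any $V_i$.

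Part (3) is the main obstacle. I would fix $w\in V_1$ and set $Q=\operatorname{conv}(V\setminus\{w\})$. Since $\beta_1-\beta_2$ vanishes on $w$, its restriction to $V\setminus\{w\}$ spans the $1$-dimensional null space attached to $Q$, giving a Gale diagram for $Q$ consisting of $m_2-1$ copies of $0$ on $V_1\setminus\{w\}$, $m_2$ copies of $-1$ on $V_2$ and $m_2$ copies of $1$ on $V_3$. Applying Proposition \ref{2} (with the edge case $m_2=2$ checked directly) identifies $Q$ as an $(m_2-1)$-fold $(n-1)$-pyramid with basis $C(2m_2,2m_2-2)$, and classifies its facets into $m_2-1$ non-simplex facets $V\setminus\{w,v\}$ ($v\in V_1\setminus\{w\}$, with $n$ vertices in dimension $n-2$) and $m_2^2$ simplex facets $V\setminus\{w,v_2,v_3\}$ with $(v_2,v_3)\in V_2\times V_3$. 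To determine which of these $w$ lies beyond I would run a facet count: by (1), $\w{P}$ has exactly $m_2^3$ simplex facets $V\setminus\{v_1,v_2,v_3\}$ with $v_i\in V_i$, of which $m_2^2$ (those with $v_1=w$) do not contain $w$. A facet of $Q$ survives as a facet of $\w{P}$ exactly when $w$ is not beyond it, and conversely any facet of $\w{P}$ missing $w$ is a facet of $Q$, so exactly $m_2^2$ facets of $Q$ are invisible from $w$; matching these with the $m_2^2$ simplex facets of $Q$ (the sets $V\setminus\{w,v_2,v_3\}$) forces $w$ to be beyond exactly the $m_2-1$ non-simplex facets of $Q$, as required.
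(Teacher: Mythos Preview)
Your proposal is correct and matches the paper's approach: compute the planar Gale transform from the two affine dependencies isolated in Theorem \ref{dim}, deduce (1) and (2) via Theorems \ref{gd} and \ref{ct}, then for (3) delete $w\in V_1$, recognise $Q=\operatorname{conv}(V\setminus\{w\})$ as an instance of Proposition \ref{2} through its one-dimensional Gale diagram, and determine which facets of $Q$ the point $w$ is beyond (the paper argues directly from simpliciality of $\w P$, you run a facet count, but both boil down to observing that the facets of $\w P$ avoiding $w$ are exactly the simplex facets $V\setminus\{w,v_2,v_3\}$ of $Q$). One small slip in your opening plan: $Q$ is \emph{not} a face of $\w P$ --- by (1), $V\setminus\{w\}$ contains all of $V_2$ and $V_3$ --- but your detailed argument never actually uses that claim, so nothing is affected.
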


\begin{proof}

By the theorem \ref{dim}, $\dim \w{P} =n-1 $ and $\w{P}$ has $n+2$ vertices. The Gale transform of $\V(\w{P})=S$ lies in the  plane:
\begin{equation*}
\{(1,1)^T, \cdots, (1,1)^T,  (-1,0)^T, \cdots,  (-1,0)^T, (0,-1)^T, \cdots, (0,-1)^T \}
\end{equation*}
where $(1,1)^T, (-1,0)^T$ and $(0,-1)^T$ are corresponding to vertices colored by 1, 2 and 3 respectively.

(1) Conv$(J^c)$ is a proper coface of $\w{P}$ $\Leftrightarrow 0\in \text{relint conv } \overline{V(J^c)} \Leftrightarrow V_i\cap J^c\not=\emptyset, i=1,2,3.\Leftrightarrow V_i \not\subseteq J, i=1,2,3.$



(2) By the thereom \ref{ct} (2) and the Gale diagram, $\w{P}$ is simplicial. 
By the Gale diagram, any subset of $V(\w{P})$ with size $m_2-1$ forms a face of $\w{P}$. Hence $\w{P}$ is $(m_2-1)$-neigborly.

(3) Consider the convex hull $Q$ of all vertices of $\w{P}$ except one, $w$,  colored by 1. A Gale transform of $Q$ is
\begin{equation*}
\{\overbrace{1,\cdots,1}^{m_2}, \overbrace{-1, \cdots, -1}^{m_3}, \overbrace{0,\cdots,0}^{m_1-1}\}
\end{equation*} 
where $1, -1$ and $0$ are corresponding to vertices colored by 2, 3 and 1 respectively. Hence the Gale diagram of $Q$ is 

\begin{figure}[!ht]
\centering
\begin{tikzpicture}[scale=0.5]
\path (5,0) coordinate (P3);
\path (10,0) coordinate (P1);
\path (15,0) coordinate (P2);

\draw[fill=red](P1) circle [radius=0.1];
\draw[fill=green](P2) circle [radius=0.1];
\draw[fill=blue](P3) circle [radius=0.1];

\draw[->] (2,0)--(17,0);

\node[below] at (10,0) {$0$};
\node[above] at (P1) {$m_1-1$};
\node[above] at (P2) {$m_2$};
\node[below] at (P2) {$1$};
\node[above] at (P3) {$m_3$};
\node[below] at (P3) {$-1$};
\end{tikzpicture}
\end{figure}
Compare to the Gale diagram in proposition \ref{2}, $Q\cong T_{m_2-1}^{n-1, m_1-1}$ is a $(m_1-1)$-fold $(n-1)$-pyramid  with basis $C(2m_2,2m_2-2)$.

Since $\w{P}$ is simplicial and $\w{P}=conv(Q, w)$, $w$ must be beyond  all the non-simplex facets of $Q$. By the Proposition \ref{2}, $J\subsetneq S-\{w\}$ is a facet of $Q$ if and only if either $J=S-\{w,v_1,v_2\}$ or $J=S-\{w,w'\}$, where the color of $v_1,v_2$ and $w'(\ne w)$ are 1, 2 and 3 respectively. Obviously,  facets whose vertex sets has form $S-\{w,v_1,v_2\}$($S-\{w,w'\}$ respectively) are (not, respectively) simplexes. Hence there are exactly $m_1-1$ non-simplex facet of $Q$, each of which is corresponding to a vertices(distinct to $w$) colored by 1.

On the other side, all facet of $Q$ with type $S-\{w,v_1,v_2\}$ is also a facet of $\w{P}$. Hence $w$ is not beyond such facet of $Q$. Thus $w$ is exactly beyond all the non-simplex face of $Q$.
\end{proof}





\section{Combinatorial equivalence of $\w{P}$ }
Denote by $m(P)=(m_1(P),m_2(P),m_3(P))$ where $m_1(P)\leq m_2(P)\leq m_3(P)$. We say that $m(P)$ has type I, II, III, or IV, if $m(P)$ satisfies the condition in Proposition \ref{1}, \ref{2}, \ref{3} or \ref{4} respectively.

\begin{thm}\label{iso}
Let $P$ and $P'$ be two 3-dimensional convex 3-colorable polytopes. Then $\w{P}$ and $\w{P'}$ are combinatorially equivalent if and only if
\begin{itemize}
\item[(1)] $P$ and $P'$ has same number of faces( or vertices, or edges, equivalently);
\item[(2)] $m(P)$ and $m(P')$ has same type;
\item[(3)] $m_2(P)=m_2(P')$.
\end{itemize}

\end{thm}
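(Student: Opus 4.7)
\emph{Proof plan for Theorem~\ref{iso}.} I will dispatch the two implications separately, using Propositions~\ref{1}--\ref{4} as the explicit combinatorial models of $\w{P}$ in the four cases.

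For the sufficiency direction, condition~(1) fixes the common value $n$ (since $P$ has $n+2$ faces), hence $m_1(P)+m_2(P)+m_3(P)=n+2=m_1(P')+m_2(P')+m_3(P')$. Condition~(2) asserts that $m(P)$ and $m(P')$ satisfy the same equalities among $m_1,m_2,m_3$, and condition~(3) matches their $m_2$. Together these pin down the full triple $(m_1,m_2,m_3)$ in Types~II, III, IV, and in Type~I they leave $\w{P}\cong T^n_{m_2-1}$ depending only on $(n,m_2)$. In every case Propositions~\ref{1}--\ref{4} immediately yield $\w{P}\cong\w{P'}$.

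For the necessity direction, suppose $\w{P}\cong\w{P'}$. The common vertex count $n+2$ forces $n=n'$, and since any 3-colorable polytope has f-vector $(2n,3n,n+2)$, condition~(1) follows. To recover the type I proceed in three stages: Theorem~\ref{dim} shows that Type~IV is the unique type with $\dim\w{P}=n-1$; Propositions~\ref{1}(2), \ref{2}(2), \ref{3}(2) show that among Types~I, II, III, simpliciality of $\w{P}$ singles out Type~I; and to separate Types~II and III, I use that $\w{P}$ is a $k$-fold $n$-pyramid with basis $C(2\ell,2\ell-2)$. Since the Gale transform of this cyclic basis contains no zero vector, by Theorem~\ref{ct}(2) the basis is itself not a pyramid, making the maximal fold count $k$ and the basis combinatorial invariants of $\w{P}$. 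In Type~II, $k=m_1<m_2=\ell$, while in Type~III, $k=m_3>m_2=\ell$, so comparing $k$ with $\ell$ resolves the type. With the type fixed, condition~(3) follows from the same intrinsic data: $m_2=\ell$ in Types~II and III, $m_2=(n+2)/3$ in Type~IV, and in Type~I the one-dimensional Gale diagram partitions the $n+2$ vertices into groups of sizes $m_2$ and $n+2-m_2$, where $m_1<m_2<m_3$ forces $m_2<m_1+m_3$, so $m_2$ is identified as the smaller of the two group sizes.

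The main obstacle is separating Types~II and III, since both produce a multi-fold pyramid with a cyclic basis of the same flavor. The resolution hinges on the fact that the basis $C(2m_2,2m_2-2)$ is not itself a pyramid, so the fold count is unambiguously defined and can be compared to the base parameter to key the type.
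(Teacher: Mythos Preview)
Your argument is correct, and in several places cleaner than the paper's. For condition~(1) you read off $n=n'$ directly from the common vertex count $n+2$, whereas the paper first sorts out whether both polytopes are Type~IV (via the vertex--dimension gap) and then unwinds $\dim\w{P}$ to recover $f_2(P)$; your route is shorter.

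For condition~(2) the paper invokes Gr\"unbaum's classification of $d$-polytopes with $d+2$ vertices (the criterion on p.~108 of \cite{g}, that combinatorial equivalence is equivalent to matching Gale-diagram multiplicity triples $(\bar m_0,\bar m_1,\bar m_{-1})$ up to swapping $\pm1$), and then compares the triples computed in Propositions~\ref{1}--\ref{3}. You instead separate the types by intrinsic combinatorial invariants: dimension isolates Type~IV, simpliciality isolates Type~I, and for the remaining Types~II and III you use that the basis $C(2m_2,2m_2-2)$ is not itself a pyramid, so the maximal pyramid fold count $k$ and the basis parameter $\ell=m_2$ are well-defined invariants, distinguished by whether $k<\ell$ or $k>\ell$. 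This is a genuinely different (and self-contained) argument that avoids quoting the Gr\"unbaum criterion. For Type~I you do in the end appeal to the one-dimensional Gale diagram to recover $m_2$ as the smaller of the two group sizes; this is essentially the same device the paper uses, and it is worth noting explicitly that the unordered pair of multiplicities $\{m_2,\,m_1+m_3\}$ is a combinatorial invariant (equivalently, that $T^n_k\cong T^n_{k'}$ forces $k=k'$ for $1\le k,k'\le\lfloor n/2\rfloor$).

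Finally, you spell out the sufficiency direction, which the paper dismisses in one line; your observation that in Type~I the full triple $(m_1,m_2,m_3)$ is \emph{not} determined by $(n,m_2)$ but $\w P\cong T^n_{m_2-1}$ depends only on $(n,m_2)$ is exactly the point that makes sufficiency non-tautological there.
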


\begin{proof}
It's sufficient to prove the necessarity.
Suppose $\w{P}\cong \w{P'}$. 

(1) Since their differences of number of vertices and dimension must be equal,  both $m(P)$ and $m(P')$ are of type IV or not. Notice that $\dim\w{P}= f_2(P)-3$ if $m(P)$ is of type IV, and $\dim\w{P}= f_2(P)-2$ otherwise, where $f_2(P)$ is the number of faces of $P$. Hence $\dim \w{P} =\dim \w{P'} $ implies that $f_2(P)=f_2(P').$

(2) Suppose both $m(P)$and $m(P')$ are not of type IV, i.e., $\w{P}$ and $\w{P'}$ are both convex $n$-polytopes with $n+2$ vertices.

There is a criterion in \cite{g} p.108, which says that $\w{P} \cong \w{P'}$ if and only if $(\bar{m}_0, \bar{m}_1, \bar{m}_{-1}) =(\bar{m}'_0, \bar{m}'_1, \bar{m}'_{-1}) $ or $(\bar{m}_0, \bar{m}_1, \bar{m}_{-1}) =(\bar{m}'_0, \bar{m}'_{-1}, \bar{m}'_{1}) $, where $\bar{m}_j$'s and $\bar{m}'_j$'s are multiplicities of $j$ in the Gale diagram of $\w{P}$ and $\w{P'}$ respectively, $j=0,\pm 1.$ 

Notice that $m_3\geq m_2\geq m_1$. Comparing the multiplicities of  Gale diagrams in the Proposition \ref{1}, \ref{2} and \ref{3}, we get that $\w{P} \cong \w{P'}$ implies that $(\bar{m}_0, \bar{m}_1, \bar{m}_{-1}) =(\bar{m}'_0, \bar{m}'_1, \bar{m}'_{-1}) $. Hence $m(P)$ and $m(P')$ must be of same type.
Furthermore, $m_2(P)=m_2(P')$.

Of course, when $m(P)$ and $m(P')$ are of type IV, $m_2(P)=m_2(P')$ since $\dim\w{P}=3m_2(P)-3=\dim\w{P'}=3m_2(P')-3$.

\end{proof}

\end{document}